\theoremstyle{plain}
\newtheorem{theorem}{Theorem}[section]
\newtheorem{proposition}[theorem]{Proposition}
\newtheorem{corollary}[theorem]{Corollary}
\newtheorem{question}[theorem]{Question}
\theoremstyle{remark}
\begin{document}

\title [One-two-way pass-move]
{One-two-way pass-move for knots and links}

\author[H. Kim]{Hyejung Kim}
\address{Department of Mathematics and Institute of Pure and Applied Mathematics, Jeonbuk National University, Jeonju 54896, Korea}
\email{whsclsrn@jbnu.ac.kr}

\author[J. H. Lee]{Jung Hoon Lee}
\address{Department of Mathematics and Institute of Pure and Applied Mathematics, Jeonbuk National University, Jeonju 54896, Korea}
\email{junghoon@jbnu.ac.kr}

\subjclass[2020]{57K10}

\keywords{one-two-way pass-move, pass-move, $\#$-move}

\begin{abstract}
We define a local move for knots and links called the {\em one-two-way pass-move}, abbreviated briefly as the {\em $1$-$2$-move}.
The $1$-$2$-move is motivated from the pass-move and the $\#$-move, and it is a hybrid of them.
We show that the equivalence under the $1$-$2$-move for knots is the same as that of the pass-move:
a knot $K$ is $1$-$2$-move equivalent to an unknot (a trefoil respectively) if and only if
the Arf invariant of $K$ is $0$ ($1$ respectively).
On the other hand, we show that the number of $1$-$2$-moves behaves differently from the number of pass-moves.
\end{abstract}

\maketitle

\section{Introduction}\label{sec1}

There are various local moves on knot diagrams.
Each move either preserves or changes the knot type.
By studying knots combinatorially using local moves, we can obtain important results,
e.g. Reidemeister's theorem that any two diagrams of a knot can be changed to each other via a finite sequence of Reidemeister moves.

A local move is said to be an {\em unknotting operation} if any knot can be changed to an unknot via a finite sequence of the move.
The classical crossing change is an unknotting operation.
A $\#$-move is a kind of $2$-strand version of the classical crossing change for oriented diagrams, and
it is also an unknotting operation \cite{Murakami}.
When two parallel strands are over or under the other two parallel strands,
the {\em $\#$-move} changes the over/under information of all four crossings (Figure 1(b)).

A pass-move is similar to the $\#$-move, but it differs in the orientation.
When two anti-parallel strands are over or under the other two anti-parallel strands,
the {\em pass-move} changes the over/under information of all four crossings (Figure 1(a)).
The pass-move is not an unknotting operation, but
a knot $K$ is pass-move equivalent to an unknot (a trefoil respectively) if and only if
the Arf invariant of $K$ is $0$ ($1$ respectively) \cite{Kauffman}.

We define a new local move for knots and links, called the one-two-way pass-move,
which is a mixture of the pass-move and the $\#$-move.
When two parallel strands are over or under the other two anti-parallel strands,
the {\em one-two-way pass-move} changes the over/under information of all four crossings (Figure 1(c)).
We will simply call it a {\em $1$-$2$-move}.
The naming came from the one-way and two-way roads (Figure 2).

\begin{figure}[!hbt]
\includegraphics[width=12cm,clip]{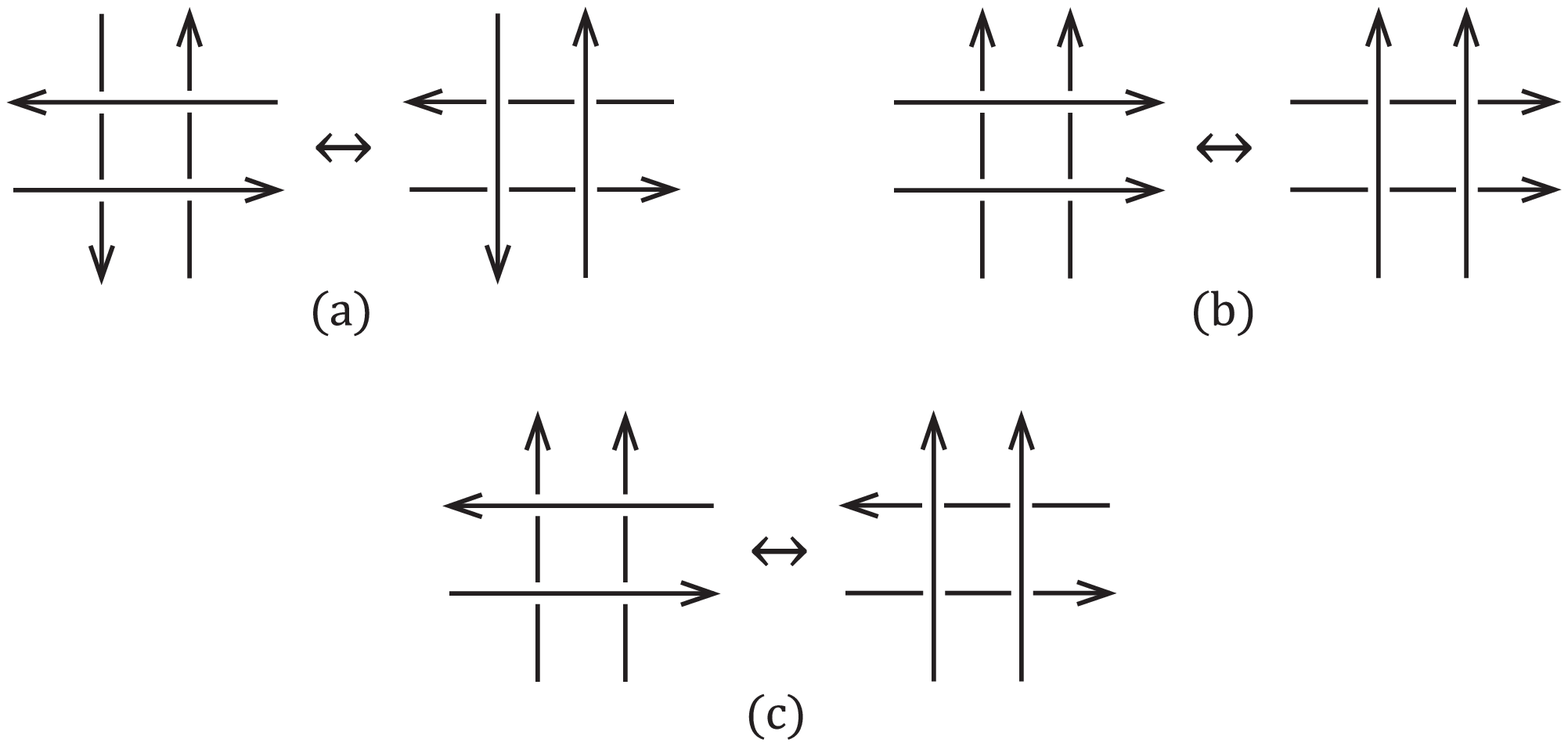}
\caption{(a) A pass-move, (b) a $\#$-move, and (c) a $1$-$2$-move.}\label{fig1}
\end{figure}

\begin{figure}[!hbt]
\includegraphics[width=5cm,clip]{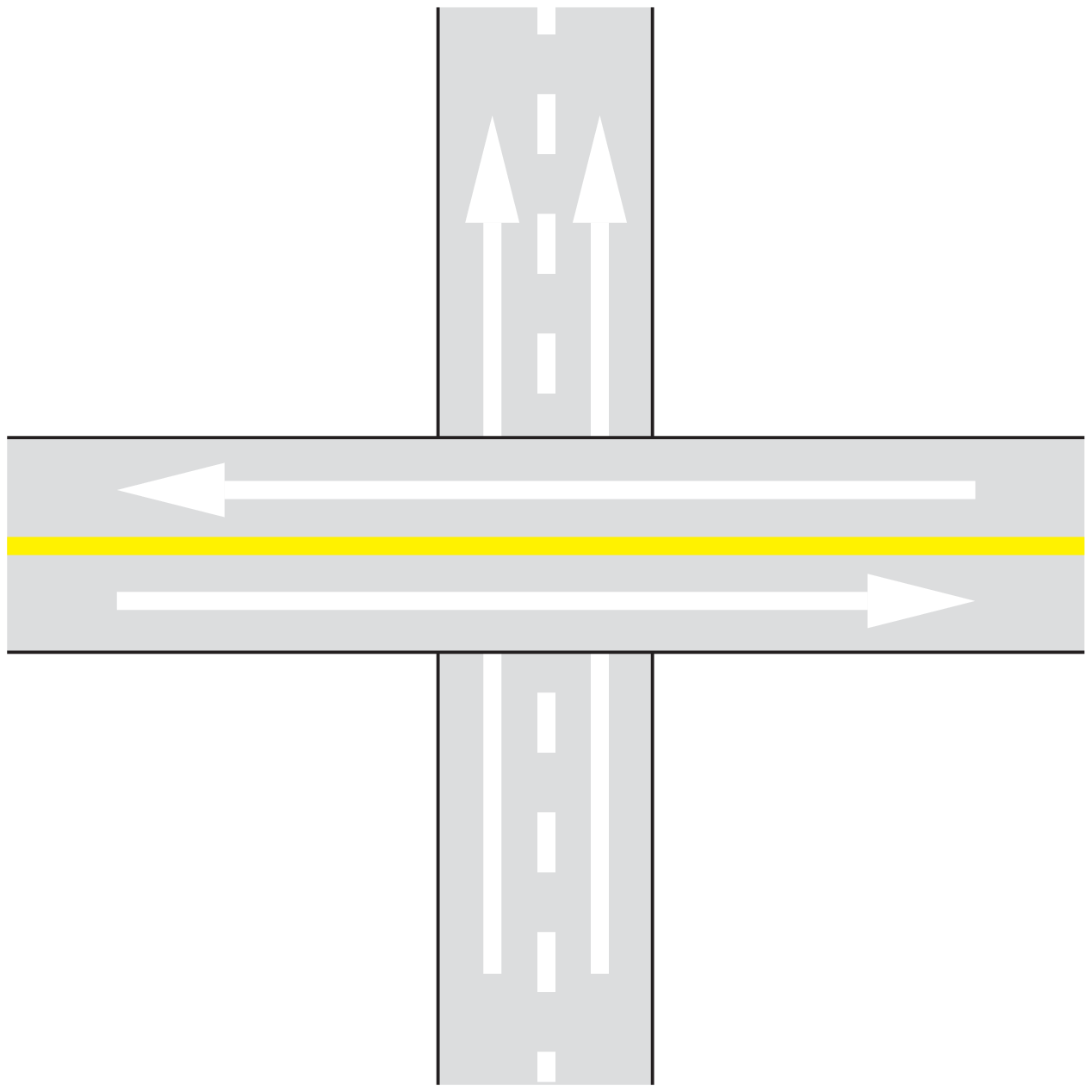}
\caption{One-way and two-way.}\label{fig2}
\end{figure}

We study basic relations between these three moves.
It turns out that the $1$-$2$-move has the same property as the pass-move.
For knots and proper links with Arf invariant $0$,
the {\em pass-move number} $p(L)$ and the {\em $1$-$2$-move number} $nt(L)$ can be defined
as the minimal number of moves required to be an unknot or an unlink.
(The definition of a proper link will be given in Section \ref{sec3}.)
We show that $p(L)$ and $nt(L)$ behave differently.
We give an example of a knot $K$ with $p(K) = nt(K) = 1$ and
another example of a link $L$ with $p(L) = 1$ and $nt(L) = 2$.
The knot $K$ is a composite knot, so it also serves as an example of a composite knot with $nt(K) = 1$.
We conjecture that there exists an example of a knot $K$ such that $p(K) = 1$ and $nt(K) = 2$.
More generally, we have the following question.

\begin{question}\label{question1.1}
Is there a knot $K$ such that $| nt(K) - p(K) |$ is a large number?
\end{question}

\section{Basic properties of the $1$-$2$-move}\label{sec2}

First we show a relation between the pass-move and the $1$-$2$-move.

\begin{proposition}\label{prop2.1}
A pass-move is realized by applying a $1$-$2$-move twice.
\end{proposition}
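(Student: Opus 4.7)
My plan is to prove the proposition by exhibiting an explicit diagram argument. The pass-move and the $1$-$2$-move both operate on a $2 \times 2$ grid of crossings formed by two pairs of strands, and the only difference between the two moves is the orientation of one pair: anti-parallel for the pass-move, parallel for the $1$-$2$-move. Since strand orientations are globally fixed and cannot be reversed by Reidemeister moves, one cannot directly realize a pass-move by a single $1$-$2$-move. Instead, the two $1$-$2$-moves must be applied within an enlarged tangle, with intermediate planar isotopy, so that their composition reproduces the pass-move.

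The concrete plan is as follows. Starting from the \emph{before} diagram of a pass-move, I would first apply planar isotopy and Reidemeister moves, in particular an R2 move between a suitable pair of strands, to introduce auxiliary crossings. Within the enlarged diagram I would identify a four-crossing subregion forming a valid $1$-$2$-move pattern (one parallel pair, one anti-parallel pair, with compatible over/under information) and apply the first $1$-$2$-move there. After more isotopy, I would identify a second $1$-$2$-move pattern and apply the second $1$-$2$-move. The idea is that each of the four original pass-move crossings is flipped exactly once by one of the two $1$-$2$-moves, while each auxiliary crossing is flipped exactly twice and so returns to its original state. Removing the auxiliary crossings by the reverse of the initial Reidemeister moves then produces the \emph{after} diagram of the pass-move.

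The main obstacle is choosing the auxiliary crossings, and the precise locations of the two $1$-$2$-moves, so that each intermediate step is a genuine $1$-$2$-move configuration with matching orientations and over/under data, and so that the cumulative effect is exactly the four-crossing flip of the pass-move. I expect this verification to be made concrete by a figure tracking all strand orientations and over/under data through the sequence of intermediate diagrams.
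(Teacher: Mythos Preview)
Your plan is correct and follows essentially the same strategy as the paper: introduce auxiliary crossings via Reidemeister moves, apply two $1$-$2$-moves inside the enlarged tangle, and then remove the auxiliary crossings by the inverse Reidemeister moves. The paper's explicit sequence is a RI-move, two RII-moves, two $1$-$2$-moves, two RII-moves, and a RI-move (so an RI is used in addition to the RII you anticipate), all displayed in a single figure; your outline would become a complete proof once such a figure is drawn and the orientation/over--under data are checked at each step.
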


\begin{proof}
Without loss of generality, we assume that two horizontal anti-parallel strands are
over two vertical anti-parallel strands as in Figure 3.
A pass-move is achived by applying a RI-move, two RII-moves, two $1$-$2$-moves,
two RII-moves, and a RI-move, as illustrated in Figure \ref{fig3}.
(A RI-move (RII-move respectively) is a Reidemeister move I (II respectively).)

\begin{figure}[!hbt]
\includegraphics[width=11cm,clip]{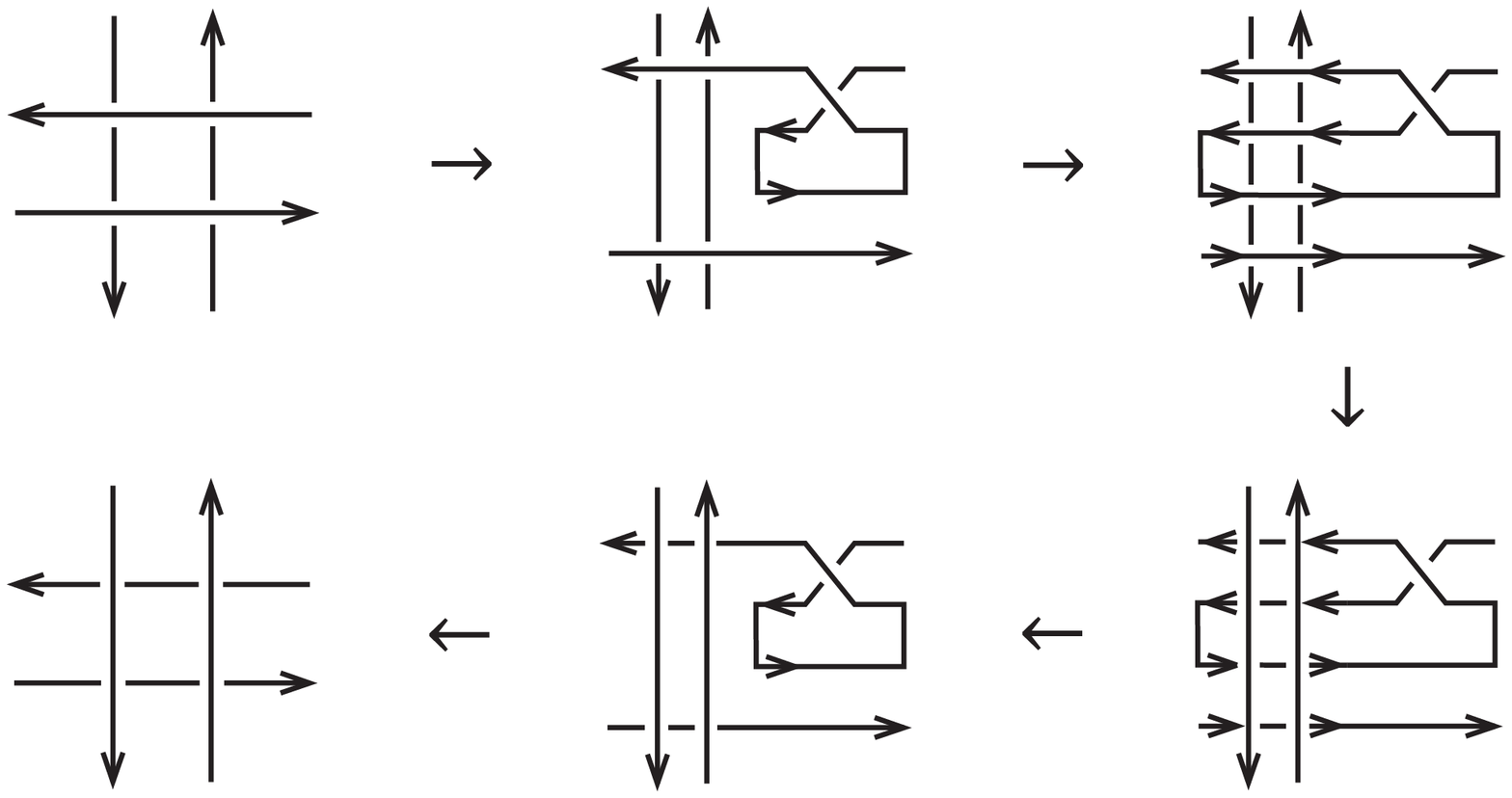}
\caption{A pass-move via two $1$-$2$-moves and Reidemeister moves.}\label{fig3}
\end{figure}
\end{proof}

As a corollary, we have the following.

\begin{corollary}\label{cor2.2}
$nt(L) \le 2 p(L)$.
\end{corollary}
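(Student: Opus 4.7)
The plan is to deduce this immediately from Proposition \ref{prop2.1} by replacing pass-moves with pairs of $1$-$2$-moves in an optimal unknotting/unlinking sequence.

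First I would fix $L$ with $\mathrm{Arf}(L)=0$ (so $p(L)$ is defined) and choose a sequence of pass-moves realizing $p(L)$, i.e.\ a sequence of $p(L)$ pass-moves (together with ambient isotopy, equivalently Reidemeister moves) that transforms $L$ into an unknot or unlink. Next I would invoke Proposition \ref{prop2.1} to substitute, in each step of this sequence, the single pass-move with the prescribed combination of two $1$-$2$-moves and a handful of Reidemeister I and II moves. Since Reidemeister moves do not change the link type and are not counted in $nt$, the total number of $1$-$2$-moves used is exactly $2p(L)$.

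The resulting sequence exhibits $L$ as $1$-$2$-move equivalent to the same trivial link via at most $2p(L)$ $1$-$2$-moves, so by the minimality definition of $nt(L)$ we conclude $nt(L)\le 2p(L)$. There is essentially no obstacle here; the only point worth checking is that $nt(L)$ is well-defined for the same class of $L$ for which $p(L)$ is defined, which follows from the fact (to be established in the paper, and already implicit from Proposition \ref{prop2.1}) that anything reducible to a trivial link by pass-moves is also reducible by $1$-$2$-moves.
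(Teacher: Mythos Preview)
Your argument is correct and is exactly the approach the paper intends: the corollary is stated immediately after Proposition~\ref{prop2.1} with no separate proof, since replacing each pass-move in a minimal sequence by two $1$-$2$-moves gives the bound $nt(L)\le 2p(L)$ directly.
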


Similarly, we have the following.

\begin{proposition}\label{prop2.3}
A $1$-$2$-move is realized by applying a $\#$-move twice.
\end{proposition}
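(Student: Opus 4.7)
The plan is to mirror the construction used in the proof of Proposition~\ref{prop2.1}. Start with a $1$-$2$-move region: two horizontal parallel strands lying over two vertical anti-parallel strands. A $\#$-move requires a region of the form ``two parallel strands over two parallel strands,'' so the obstruction is that one of the two pairs of strands has the wrong relative orientation. The goal is to modify the diagram locally by Reidemeister moves so that a $\#$-move configuration appears, apply $\#$-moves there, and then undo the auxiliary Reidemeister moves.

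Concretely, I would apply an RI-move to one of the two vertical anti-parallel strands to introduce a small kink, and then slide the kink across the crossing region using two RII-moves. The effect of traversing the underside of the kink is to locally reverse the apparent orientation of that strand in the crossing region, so the two vertical strands now appear parallel. A $\#$-move is now available; applying it flips the over/under information at the four crossings. The remaining residual crossings created by the Reidemeister moves can be eliminated by a symmetric second $\#$-move on the other side (or by repeating the kink trick and applying a $\#$-move there), after which two RII-moves and an RI-move return the diagram to a clean $1$-$2$-moved picture. Thus the overall sequence is: one RI-move, two RII-moves, a $\#$-move, a $\#$-move, two RII-moves, one RI-move, exactly parallel to the structure of the sequence used in Proposition~\ref{prop2.1}.

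The main obstacle is a bookkeeping one rather than a conceptual one: I must check that the orientations of the strands in the locally altered region genuinely satisfy the ``two parallel over two parallel'' hypothesis required to apply the $\#$-move, and that the composition of the two $\#$-moves flips the over/under information at \emph{exactly} the four original crossings (and not at any of the Reidemeister-introduced crossings). This is cleanest to present via a figure analogous to Figure~\ref{fig3}, with the two $1$-$2$-moves in that figure replaced by two $\#$-moves and the role of the anti-parallel/parallel pairs swapped accordingly.
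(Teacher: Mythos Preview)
Your proposal is correct and follows essentially the same approach as the paper: the paper's proof consists solely of a figure (Figure~\ref{fig4}) depicting precisely the sequence you describe---an RI-move and RII-moves to locally reverse the orientation of one of the anti-parallel strands so that the configuration becomes ``parallel over parallel,'' two $\#$-moves, and then inverse Reidemeister moves to clean up---directly parallel to Figure~\ref{fig3}. Your identification of the bookkeeping as the only real issue, and your suggestion to resolve it with a figure analogous to Figure~\ref{fig3}, matches exactly what the paper does.
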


\begin{proof}
Figure \ref{fig4} illustrates the relevant sequence of moves.

\begin{figure}[!hbt]
\includegraphics[width=11cm,clip]{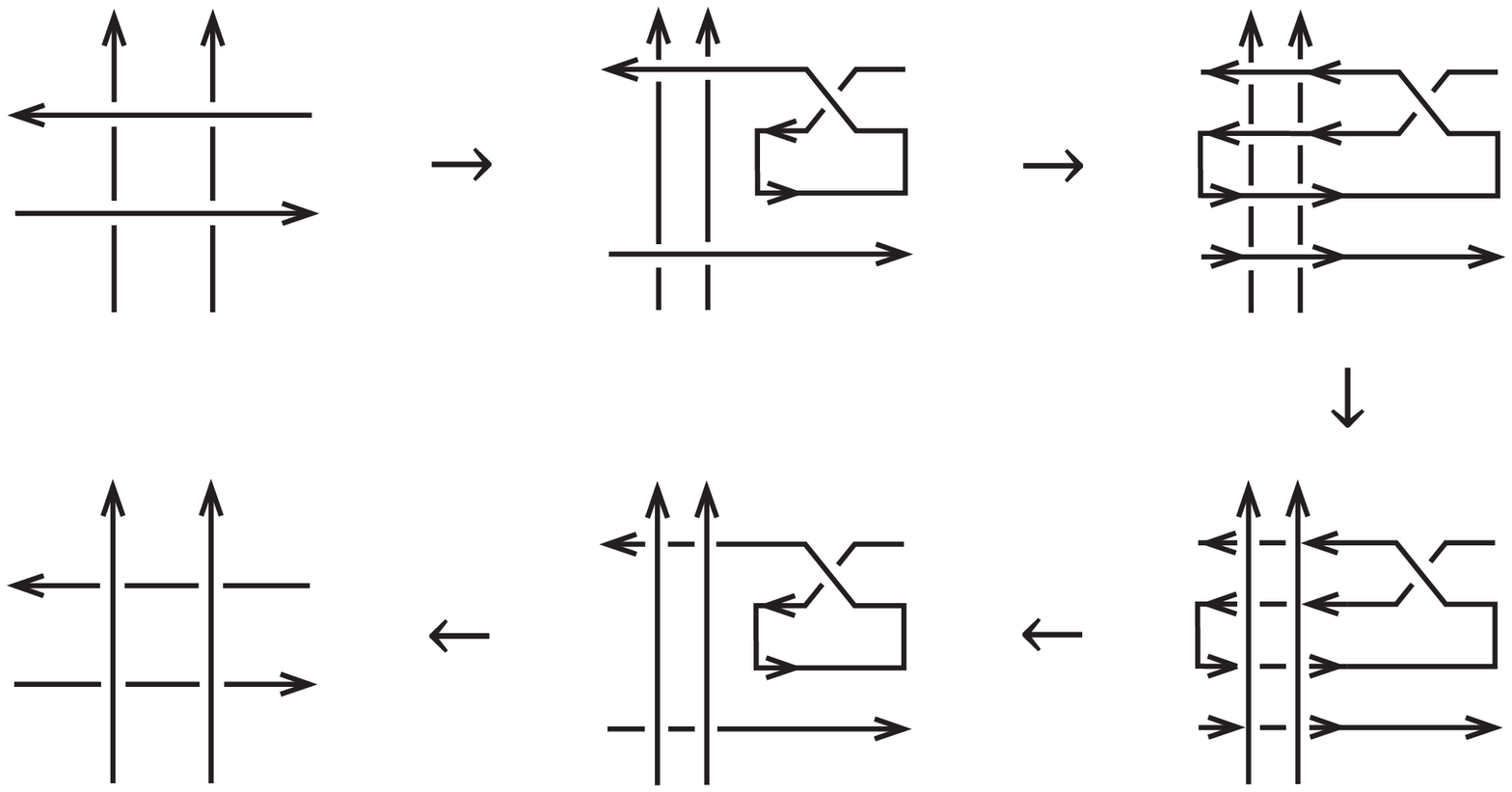}
\caption{A $1$-$2$-move via two $\#$-moves and Reidemeister moves.}\label{fig4}
\end{figure}
\end{proof}

So a pass-move is realized by applying a $\#$-move four times.

\section{Arf invariant}\label{sec3}

In this section, we briefly review some notions and properties related to the Arf invariant.
For more details, please refer to \cite{Kawauchi}.

Let $K_i$ denote a component of a link $L$.
A link $L$ is a {\em proper link} if the linking number $\mathrm{lk}(K_i, L - K_i) = 0 \pmod{2}$ for every $i$.
In particular, a knot is a proper link.

Let $b$ be a band embedded in $S^3$ with its left, right, top, bottom edges denoted by $b_l, b_r, b_t, b_b$ respectively.
Suppose that $b \cap L = b_l \cup b_r$, and
$b_l$ and $b_r$ have opposite orientations induced from $L$ and
belong to different components $K_i$ and $K_j$ of $L$ respectively.
Then we say that $L - (b_l \cup b_r) \cup (b_t \cup b_b)$ is a link obtained from $L$ by a {\em fusion}.
See Figure \ref{fig5}.

\begin{figure}[!hbt]
\includegraphics[width=8cm,clip]{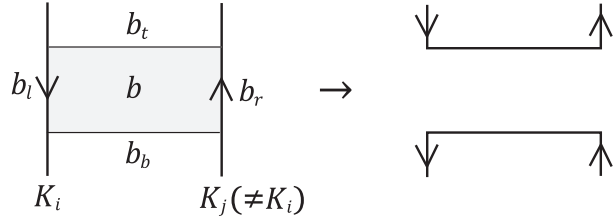}
\caption{A fusion.}\label{fig5}
\end{figure}

Let $F$ be a connected genus-$g$ Seifert surface of an $r$-component link $L$.
let $\mathcal{B} = \{ x_i, y_i, z_k \,|\, i=1, \ldots, g$ and $k=1, \ldots, r-1 \}$ be a basis of $H_1(F; \mathbb{Z}_2)$
represented by loops in $F$ such that $|x_i \cap y_j| = \delta_{ij}$ (the Kronecker delta) and
$z_k$ is a $k$-th component of $L$.
See Figure \ref{fig6}.

\begin{figure}[!hbt]
\includegraphics[width=9.5cm,clip]{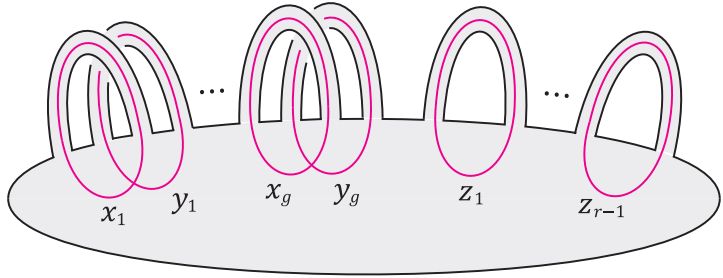}
\caption{A Seifert surface $F$.}\label{fig6}
\end{figure}

For a loop $l$ in $F$, let $q(l) = \mathrm{lk}(l^{+}, l) \pmod{2}$, where
$l^{+}$ is a loop obtained by slightly pushing $l$ to the positive direction of $F$.
For the basis $\mathcal{B}$, let
$$\mathrm{Arf}(F, \mathcal{B}) = \sum_{i=1}^{g} q(x_i)q(y_i) \pmod{2}.$$
This value is called the {\em Arf invariant} of $F$ with respect to $\mathcal{B}$.
An Arf invariant depends on the choice of $F$ and $\mathcal{B}$.
But for proper links, it is an invariant of a link.
The following are known results concerning the Arf invariant.

\begin{proposition}\label{prop3.1}
Suppose that $L_1$ and $L_2$ are proper links.
Then a split union $L_1 \sqcup L_2$ is also a proper link, and
$\mathrm{Arf}(L_1 \sqcup L_2) = \mathrm{Arf}(L_1) + \mathrm{Arf}(L_2) \pmod{2}$.
\end{proposition}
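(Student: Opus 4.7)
The plan is to verify properness by a direct linking-number calculation, and then to establish the Arf identity by building a single connected Seifert surface for $L_1 \sqcup L_2$ out of Seifert surfaces for $L_1$ and $L_2$ joined by a thin tube, so that the Arf sum splits cleanly across the tube.

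First, for properness, I would fix a component $K_i$ of $L_1$ and a splitting $2$-sphere for $L_1 \sqcup L_2$; the sphere forces $\mathrm{lk}(K_i, L_2) = 0$, so
\[
\mathrm{lk}\bigl(K_i, (L_1 \sqcup L_2) - K_i\bigr) = \mathrm{lk}(K_i, L_1 - K_i) + \mathrm{lk}(K_i, L_2) \equiv 0 \pmod{2}
\]
by properness of $L_1$, and the symmetric argument treats the components of $L_2$.

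Next, I would take connected Seifert surfaces $F_j$ of $L_j$ with bases $\mathcal{B}_j = \{x_i^j, y_i^j\}_{i=1}^{g_j} \cup \{z_k^j\}_{k=1}^{r_j - 1}$ as in the definition, isotope $F_1$ and $F_2$ into opposite sides of the splitting sphere so they are disjoint, and join them by a thin annulus (tube) glued along small disks removed from the interiors of $F_1$ and $F_2$. A routine Euler-characteristic computation shows that the resulting connected Seifert surface $F$ of $L_1 \sqcup L_2$ has genus $g_1 + g_2$. I would then assemble a basis $\mathcal{B}$ of $H_1(F; \mathbb{Z}_2)$ by combining all the $x_i^j$ and $y_i^j$ with all the $z_k^j$ together with one additional component of $L$, giving $r_1 + r_2 - 1$ boundary-type generators in total. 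Since all these cycles sit inside $F_1 \cup F_2$ and are disjoint from the tube, the symplectic condition $|x_i \cap y_j| = \delta_{ij}$ is inherited from the $\mathcal{B}_j$.

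Finally, because $q(l) = \mathrm{lk}(l^+, l) \pmod 2$ depends only on a tubular neighborhood of $l$ in the Seifert surface and the tube is disjoint from every $x_i^j$ and $y_i^j$, the identity $q_F = q_{F_j}$ holds on cycles living in $F_j$. Therefore
\[
\mathrm{Arf}(F, \mathcal{B}) = \sum_{i=1}^{g_1} q(x_i^1)q(y_i^1) + \sum_{i=1}^{g_2} q(x_i^2)q(y_i^2) = \mathrm{Arf}(F_1, \mathcal{B}_1) + \mathrm{Arf}(F_2, \mathcal{B}_2),
\]
and the invariance of Arf for proper links yields the stated formula. The main obstacle I expect is verifying that $\mathcal{B}$ really is a basis of $H_1(F; \mathbb{Z}_2)$: concretely, one must check that the core circle of the tube, which at first looks like an extra homology class, is in fact homologous to the sum of the components of $L_1$ (using the fact that the sum of all boundary circles of the punctured surface $F_1$ minus a disk is null-homologous in that surface), so that it is already accounted for by the component-of-$L$ part of $\mathcal{B}$ and no extra basis element outside the prescribed form is required.
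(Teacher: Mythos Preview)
The paper does not actually prove Proposition~\ref{prop3.1}; it is listed among ``known results concerning the Arf invariant'' and is implicitly attributed to the reference \cite{Kawauchi}. So there is no proof in the paper to compare against.

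That said, your argument is sound and is essentially the standard one. The properness check is immediate from the vanishing of cross-linking numbers across a splitting sphere. For the Arf identity, tubing $F_1$ to $F_2$ across the sphere produces a connected Seifert surface of the correct genus $g_1+g_2$, and since the symplectic loops $x_i^j, y_i^j$ can be chosen disjoint from the attaching disks, both the intersection pattern and the values $q(x_i^j), q(y_i^j)$ are unchanged, so the Arf sum splits exactly as you wrote.

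The one point you flag --- that the proposed collection $\mathcal{B}$ really is a basis of $H_1(F;\mathbb{Z}_2)$ of the required shape --- is indeed the only place that needs care, and your resolution is correct: the circle $\partial D_1$ (the core of the tube) is homologous in $F_1\setminus D_1$ to the sum of all boundary components of $F_1$, hence lies in the span of the component classes already included in $\mathcal{B}$. A clean way to phrase this is that $H_1(F;\mathbb{Z}_2)$ has rank $2(g_1+g_2)+(r_1+r_2)-1$, your proposed $\mathcal{B}$ has exactly that many elements, and linear independence follows because any relation, restricted to $F_j\setminus D_j$, would give a relation among $\{x_i^j,y_i^j,z_k^j\}$ together with $\partial D_j$, which form a basis of $H_1(F_j\setminus D_j;\mathbb{Z}_2)$. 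This closes the argument.
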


\begin{proposition}\label{prop3.2}
Suppose that $L$ is a proper link.
If $L'$ is a link obtained from $L$ by a fusion, then
$L'$ is also a proper link and $\mathrm{Arf}(L') = \mathrm{Arf}(L)$.
\end{proposition}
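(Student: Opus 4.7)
The plan is to upgrade a Seifert surface $F$ for $L$ to one for $L'$ by attaching the fusion band $b$, extend the basis $\mathcal{B}$ across the new handle, and show that the newly added hyperbolic pair contributes nothing to the Arf sum thanks to the properness of $L$. After relabeling, assume $b_l\subset K_{r-1}$ and $b_r\subset K_r$, so that fusion merges $K_{r-1}$ and $K_r$ into a single component $K'$ of $L'$.

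First I would check that $L'$ is proper. For $k\le r-2$ the band is disjoint from $K_k$, so $\mathrm{lk}(K_k,L'-K_k)=\mathrm{lk}(K_k,L-K_k)\equiv 0\pmod 2$. For the new component, additivity of linking number under fusion gives
\[
\mathrm{lk}(K',L'-K')=\mathrm{lk}(K_{r-1},L-K_{r-1})+\mathrm{lk}(K_r,L-K_r)-2\,\mathrm{lk}(K_{r-1},K_r),
\]
which is even because $L$ is proper.

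Next, take a connected Seifert surface $F$ for $L$ arranged so that $F\cap b=b_l\cup b_r$, and set $F'=F\cup b$. Then $F'$ is a connected Seifert surface for $L'$ of genus $g+1$ with $r-1$ boundary components. Extend $\mathcal{B}$ to a basis $\mathcal{B}'$ of $H_1(F';\mathbb{Z}_2)$ by keeping the loops $x_i,y_i$ for $i\le g$ and $z_k$ for $k\le r-2$, and adjoining one new hyperbolic pair $(x',y')$: let $x'=\alpha\cup\beta$, where $\alpha$ is a core arc of $b$ and $\beta\subset F$ is an embedded arc joining the feet of $\alpha$ and disjoint from all other basis loops; and let $y'$ be a parallel pushoff of $K_{r-1}$ into the interior of $F$. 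A direct intersection count shows that $\mathcal{B}'$ is a symplectic basis, the only essential intersection being a single transverse crossing $|x'\cap y'|=1$ where $\beta$ meets the pushoff of $K_{r-1}$.

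The key computation is then $q(y')=0$. Since $y'$ is a parallel copy of the component $K_{r-1}$ of $L$, its $F'$-normal pushoff is isotopic in $S^3\setminus y'$ to the Seifert-framed pushoff $K_{r-1}^+$ of $K_{r-1}$ via $F$. Using that $L^+$ bounds a parallel copy of $F$ disjoint from $K_{r-1}$, we have $\mathrm{lk}(L^+,K_{r-1})=0$, hence $\mathrm{lk}(K_{r-1}^+,K_{r-1})=-\mathrm{lk}(K_{r-1},L-K_{r-1})\equiv 0\pmod 2$ precisely because $L$ is proper. The loops $x_i,y_i$ sit inside $F$ with unchanged framing, so their $q$-values are unchanged, and
\[
\mathrm{Arf}(F',\mathcal{B}')=\sum_{i=1}^{g}q(x_i)q(y_i)+q(x')q(y')=\mathrm{Arf}(F,\mathcal{B}).
\]
Well-definedness of Arf for proper links then yields $\mathrm{Arf}(L')=\mathrm{Arf}(L)$. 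The main technical hurdle will be the basis bookkeeping of step three: one has to arrange $\beta$ and the pushoff of $K_{r-1}$ so that $\mathcal{B}'$ has the standard symplectic intersection pattern modulo $2$, and to verify that the Seifert framing of $K_{r-1}$ inherited from $F$ agrees with the framing that defines $q(y')$ for the interior curve in $F'$.
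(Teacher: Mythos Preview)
The paper does not supply its own proof of this proposition; it is listed (together with Proposition~3.1 and Theorem~3.3) as a known result, with the reader directed to Kawauchi's survey for details. So there is no argument in the paper to compare against.

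Your proposal is the standard proof and is sound. The properness check for $L'$ is correct, and the Seifert-surface construction $F'=F\cup b$ with the basis extension is exactly the right mechanism: the boundary-parallel class $z_{r-1}$ gets promoted to one member $y'$ of a new hyperbolic pair, and the crucial point is that $q(y')\equiv \mathrm{lk}(K_{r-1},L-K_{r-1})\equiv 0\pmod 2$ precisely because $L$ is proper, so the added term $q(x')q(y')$ vanishes regardless of $q(x')$. The two issues you flag at the end---arranging $F\cap b=b_l\cup b_r$ with orientations compatible so that $F\cup b$ is an oriented surface bounding $L'$, and verifying that $\mathcal{B}'$ has the required mod~$2$ intersection pattern (in particular that $\beta$ can be chosen disjoint from the $x_i,y_i,z_k$ and meeting $y'$ once)---are genuine but routine; once the basis is set up as in Figure~6 of the paper, both follow from elementary surface topology.
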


\begin{theorem}\label{thm3.3}
Suppose that $L_1$ and $L_2$ are proper links.
Then $L_1$ and $L_2$ are pass-move equivalent if and only if $\mathrm{Arf}(L_1) = \mathrm{Arf}(L_2)$.
\end{theorem}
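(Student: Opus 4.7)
The plan is to prove the two implications separately.

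\smallskip

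\emph{Forward direction.} It suffices to show that a single pass-move preserves the Arf invariant. Given $L$ and $L'$ differing by a local pass-move, I would choose a Seifert surface $F$ for $L$ whose intersection with the pass-move region consists of two parallel unknotted bands, one for each anti-parallel pair of strands. Performing the pass-move and adjusting the surface locally, one obtains a Seifert surface $F'$ for $L'$ that differs from $F$ by attaching a $1$-handle. The $\mathbb{Z}_2$-homology $H_1(F')$ thus has a basis equal to that of $H_1(F)$ augmented by a new symplectic pair $(x_{g+1}, y_{g+1})$ coming from the added handle. A direct computation in the explicit local model shows that at least one of $q(x_{g+1})$, $q(y_{g+1})$ vanishes, because the corresponding loop bounds a flat disk in $S^3$ disjoint from $F'$ with zero self-linking. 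Hence the new contribution to the Arf sum is $0 \pmod{2}$, so $\mathrm{Arf}(L') = \mathrm{Arf}(L)$.

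\smallskip

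\emph{Reverse direction.} I would show that every proper link $L$ is pass-equivalent to a standard representative determined by $\mathrm{Arf}(L)$ and the number of components. This proceeds in two steps. First, separate $L$ by pass-moves into a split union of a single knot $K$ and an unlink: since the proper-link condition gives $\mathrm{lk}(K_i, L - K_i) \equiv 0 \pmod{2}$ for every component, the crossings between any fixed pair of components can be grouped into pairs of opposite sign, and each such pair can be removed by isotopy combined with a pass-move. Second, reduce $K$ to either the unknot or the trefoil according to $\mathrm{Arf}(K)$: put $K$ in a band-projection form obtained via Seifert's algorithm, and then eliminate full twists two at a time by pass-moves until only a trefoil summand (possibly none) remains. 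Combined with Propositions \ref{prop3.1} and \ref{prop3.2}, this shows that any two proper links with equal Arf and equal component count are both pass-equivalent to the same standard representative, and hence to each other.

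\smallskip

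The main obstacle is the first step of the reverse direction: producing explicit pass-move sequences that split off components with even pairwise linking number. This is a delicate band-pass argument requiring careful orientation bookkeeping, since a pass-move acts only on two anti-parallel pairs and cannot be applied at an arbitrary configuration of four strands. The forward direction and the knot-case reduction, by contrast, both reduce to local models on a chosen Seifert surface and are essentially computational once that surface is fixed.
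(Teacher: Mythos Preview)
The paper does not prove Theorem~\ref{thm3.3}. It is listed among ``known results concerning the Arf invariant'' and is quoted from the literature (Kauffman's \emph{Formal knot theory} and Kawauchi's survey); the authors use it as a black box in their proof of Theorem~\ref{thm3.4}. So there is no in-paper argument to compare your attempt against.

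That said, a few comments on your sketch. In the forward direction, the claim that the pass-move ``attaches a $1$-handle'' to the Seifert surface is not the standard picture: if $F$ meets the local ball in two bands (one per anti-parallel pair), then after the pass-move one simply has the \emph{same} abstract surface with the two bands crossing the other way. No handle is added, the genus is unchanged, and the argument that $\mathrm{Arf}$ is preserved goes through the Seifert form directly (the band-crossing change leaves the self-linking numbers $q(x_i)$ unchanged), not via a new symplectic pair. Your version would need to be reworked.

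In the reverse direction there is a genuine gap. Properness says $\mathrm{lk}(K_i, L-K_i)\equiv 0\pmod 2$, i.e.\ the \emph{total} linking of each component with the rest is even; it does \emph{not} say that each pairwise linking number $\mathrm{lk}(K_i,K_j)$ is even, let alone zero. So you cannot ``group the crossings between any fixed pair of components into pairs of opposite sign'' in general. The actual splitting argument requires pass-moves that involve three or four distinct components simultaneously to shuffle odd pairwise linkings around, and this is exactly the ``delicate band-pass argument'' you flag at the end --- but as written your outline does not address it. Finally, note that a pass-move preserves the number of components, so the theorem as stated already presupposes that $L_1$ and $L_2$ have the same number of components; you implicitly use this, and it is worth making explicit.
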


We show that a result similar to Theorem \ref{thm3.3} holds for the $1$-$2$-move.

\begin{theorem}\label{thm3.4}
Suppose that $L_1$ and $L_2$ are proper links.
Then $L_1$ and $L_2$ are $1$-$2$-move equivalent if and only if $\mathrm{Arf}(L_1) = \mathrm{Arf}(L_2)$.
\end{theorem}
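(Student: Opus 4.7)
The plan is to address the two directions of the biconditional separately.

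The implication $\mathrm{Arf}(L_1) = \mathrm{Arf}(L_2) \Rightarrow L_1$ and $L_2$ are $1$-$2$-move equivalent follows immediately from preceding results: by Theorem \ref{thm3.3}, proper links with equal Arf invariants are pass-move equivalent, and by Proposition \ref{prop2.1} each pass-move can be realized by two $1$-$2$-moves together with Reidemeister moves, so pass-move equivalence implies $1$-$2$-move equivalence.

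The substantive direction is the converse, and for this it suffices to show that a single $1$-$2$-move on a proper link $L$ yields a proper link $L'$ with $\mathrm{Arf}(L') = \mathrm{Arf}(L)$. Preservation of properness is a direct diagrammatic check: a careful accounting of the signed contributions of the four altered crossings shows that $\mathrm{lk}(K_i, L - K_i)$ changes by an even integer for every component $K_i$.

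For invariance of the Arf invariant, my plan is a Seifert-surface comparison. Let $B$ be a ball in which the move is performed, and choose a Seifert surface $F$ for $L$ whose intersection $F \cap B$ takes a standard form adapted to the four strands: inside $B$ the two anti-parallel strands are joined by a thin rectangular band $\beta \subset F$, and the two parallel strands meet $F$ in two disjoint disks that extend through $\partial B$ to the rest of $F$ outside $B$. Any Seifert surface can be brought to this form by an isotopy together with the addition of a bounded number of trivial handles, neither of which affects the Arf invariant. Then construct a Seifert surface $F'$ for $L'$ that agrees with $F$ outside $B$ and replaces $F \cap B$ by the analogous standard piece for the new strand configuration. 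Pick a basis $\mathcal{B} = \{x_i, y_i, z_k\}$ of $H_1(F; \mathbb{Z}_2)$ whose $x_i, y_i$ curves lie outside $B$ except for one pair associated with $\beta$, and transport it to a basis $\mathcal{B}'$ of $H_1(F'; \mathbb{Z}_2)$. The quadratic form $q(\alpha) = \mathrm{lk}(\alpha^+, \alpha) \pmod{2}$ on basis elements unaffected by the surgery is unchanged, and for the pair associated with $\beta$ one verifies by a direct computation in the local model that the product $q(x)q(y)$ agrees modulo $2$ before and after. Summing over all pairs gives $\mathrm{Arf}(F, \mathcal{B}) = \mathrm{Arf}(F', \mathcal{B}')$, hence $\mathrm{Arf}(L') = \mathrm{Arf}(L)$.

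The main obstacle is the reduction to the standard local model. The joining band $\beta$ inside $B$ need not exist on the nose for an arbitrary Seifert surface $F$, and the two parallel strands may lie on the same component or on distinct components of $L$, requiring a small case analysis and possibly the addition of tubes or trivial handles to enforce the standard form. An alternative approach I would attempt, which could bypass part of this case analysis, is to realize a $1$-$2$-move as the composition of a fusion, a pass-move, and a fission applied to an auxiliary link obtained by banding together the two parallel strands of the move; provided all intermediate links remain proper, Arf-preservation then follows from Proposition \ref{prop3.2} and Theorem \ref{thm3.3}, with the delicate point being to arrange the bands so that properness is maintained throughout.
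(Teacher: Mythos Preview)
Your treatment of the easy direction coincides with the paper's.  For the converse, however, the paper avoids the Seifert-surface normalization you identify as the main obstacle by a short fusion argument.  It introduces an auxiliary four-component link $L_0$, the untwisted $2$-cable of a Hopf link in which one pair of cables is parallel and the other pair anti-parallel.  One checks $\mathrm{Arf}(L_0)=0$ (band the anti-parallel pair to obtain an unlink and apply Proposition~\ref{prop3.2}).  The key observation is that a single $1$-$2$-move on $L_1$ is exactly the effect of performing \emph{four} fusions on the split union $L_1\sqcup L_0$: the four components of $L_0$ are spliced into the four strands of the move so as to reverse the over/under information.  Propositions~\ref{prop3.1} and~\ref{prop3.2} then give $\mathrm{Arf}(L_1')=\mathrm{Arf}(L_1\sqcup L_0)=\mathrm{Arf}(L_1)+\mathrm{Arf}(L_0)=\mathrm{Arf}(L_1)$, and properness comes along for free.

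Your primary Seifert-surface plan is plausible but, as you note, the reduction to a standard local model and the existence of a basis adapted to it are genuine technical debts; the paper's route sidesteps them entirely.  Your alternative idea is closer in spirit to the paper, but note a wrinkle: a fusion band must meet $L$ in arcs of \emph{opposite} orientation, so the two parallel strands cannot be directly banded together, and any fusion/pass/fission scheme has to be organized around the anti-parallel pair or an auxiliary link.  The paper's use of $L_0$ is precisely such an organization, packaging the whole move as fusions only, which is what makes the Arf bookkeeping immediate.
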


\begin{proof}
Suppose that $\mathrm{Arf}(L_1) = \mathrm{Arf}(L_2)$.
Then $L_1$ and $L_2$ are pass-move equivalent by Theorem \ref{thm3.3}.
Then by Proposition \ref{prop2.1}, $L_1$ and $L_2$ are $1$-$2$-move equivalent.

Conversely, suppose that $L_1$ and $L_2$ are $1$-$2$-move equivalent.
Let $L_0$ be an untwisted $2$-cable link of a Hopf link, where
one pair of two cables are parallel and the other pair of two cables are anti-parallel.
See Figure \ref{fig7}(a).
The link $L_0$ is a proper link.
Since we can obtain an unlink by banding the two anti-parallel components of $L_0$ and
$\mathrm{Arf}$(an unlink$) = 0$,
$\mathrm{Arf}(L_0) = 0$ by Proposition \ref{prop3.2}.
By Proposition \ref{prop3.1}, $L_1 \sqcup L_0$ is a proper link and
$\mathrm{Arf}(L_1 \sqcup L_0) = \mathrm{Arf}(L_1)$.
Performing a fusion operation four times to $L_1 \sqcup L_0$ has the same effect as a $1$-$2$-move on $L_1$ (Figure \ref{fig7}).
Let $L'_1$ be a link obtained from $L_1$ by a single $1$-$2$-move.
Then $\mathrm{Arf}(L'_1) = \mathrm{Arf}(L_1 \sqcup L_0) = \mathrm{Arf}(L_1)$.
By applying the above argument finitely many times, we conclude that $\mathrm{Arf}(L_1) = \mathrm{Arf}(L_2)$.

\begin{figure}[!hbt]
\includegraphics[width=12.5cm,clip]{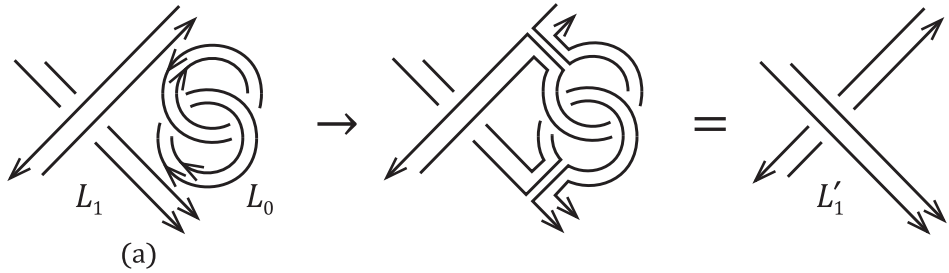}
\caption{A $1$-$2$-move via four fusions.}\label{fig7}
\end{figure}
\end{proof}

\section{A knot $K$ with $p(K) = nt(K) = 1$}\label{sec4}

Let $K =$ (a left-hand trefoil) $\#$ (a right-hand trefoil).

\begin{figure}[!hbt]
\includegraphics[width=10cm,clip]{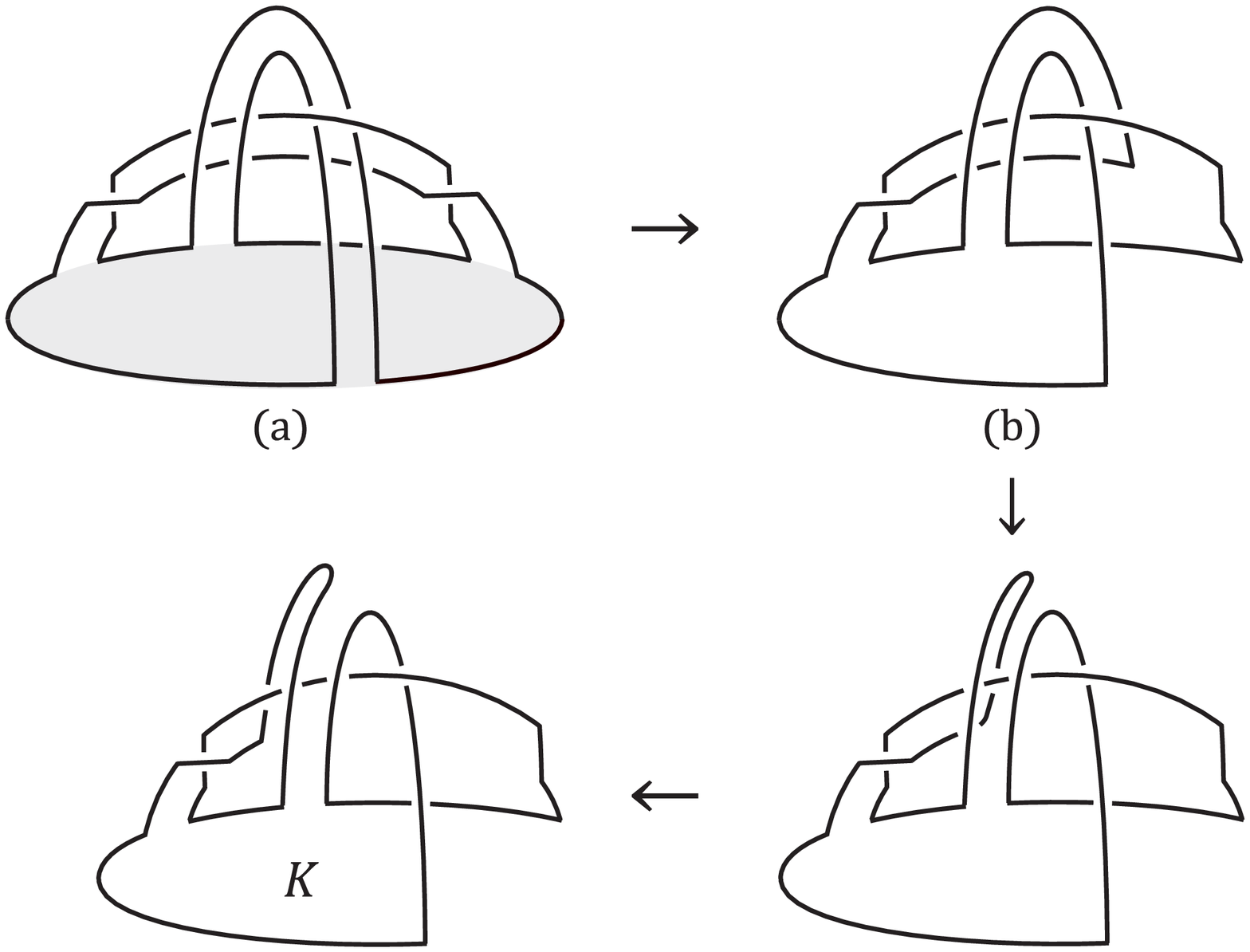}
\caption{$K =$ (a left-hand trefoil) $\#$ (a right-hand trefoil).}\label{fig8}
\end{figure}

\begin{theorem}\label{thm4.1}
$p(K) = 1$ and $nt(K) = 1$.
\end{theorem}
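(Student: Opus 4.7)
The plan proceeds in two parts: first the (easy) lower bounds and then the exhibition of single moves realizing the upper bounds.

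Both $p(K)$ and $nt(K)$ are well-defined, because the Arf invariant is additive under connected sum, so $\mathrm{Arf}(K) = \mathrm{Arf}(T_L) + \mathrm{Arf}(T_R) = 1 + 1 \equiv 0 \pmod{2}$, and Theorems \ref{thm3.3} and \ref{thm3.4} then guarantee that $K$ is both pass-move equivalent and $1$-$2$-move equivalent to an unknot. For the lower bounds, note that $K$ is nontrivial (a connected sum of two nontrivial knots is nontrivial), so no zero-length sequence of moves produces an unknot; hence $p(K) \ge 1$ and $nt(K) \ge 1$.

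For the upper bound $p(K) \le 1$, the plan is to draw $K$ in a symmetric form in which $T_L$ and $T_R$ sit on opposite sides of a central $4$-crossing square, where two anti-parallel strands coming out of $T_L$ lie over two anti-parallel strands coming out of $T_R$. A single pass-move performed on this square flips the over/under information of all four crossings at once. The resulting diagram should simplify to the unknot by Reidemeister moves: the effect of the move on the two summands is to cancel them against one another, using that $T_L$ and $T_R$ have opposite chirality. This exhibits $p(K) \le 1$ and hence $p(K) = 1$.

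For the upper bound $nt(K) \le 1$, I would redraw $K$ with one of the two trefoil summands ``flipped'' so that, in an analogous central square, the two strands from one summand appear parallel while the two from the other appear anti-parallel, matching the $1$-$2$-move configuration. A single $1$-$2$-move applied to this square, followed by Reidemeister moves, should again give the unknot. Note that Corollary \ref{cor2.2} only yields $nt(K) \le 2$, so the content of this step is genuinely new.

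The main obstacle is constructing the two explicit diagrams and checking by hand that the post-move diagrams reduce to the unknot; this is essentially a picture-based verification, and the most delicate point is the orientation bookkeeping for the $1$-$2$-move, where the opposite chirality of $T_L$ and $T_R$ is precisely what allows the orientations of the four central strands to be set up as ``two parallel over two anti-parallel'' rather than the pass-move pattern.
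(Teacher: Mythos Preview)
Your plan is correct and mirrors the paper's proof: the authors likewise establish $p(K)=1$ and $nt(K)=1$ by exhibiting explicit diagrams (their Figures~8--10) in which a single pass-move, respectively a single $1$-$2$-move, unknots $K$. The ``main obstacle'' you identify---actually producing and verifying those diagrams---is precisely what the paper supplies pictorially, so your outline becomes a complete proof once those figures are drawn and checked; your explicit lower-bound argument via Arf-additivity and nontriviality of $K$ is a small but welcome addition that the paper leaves implicit.
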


\begin{proof}
The knot in Figure \ref{fig8}(a) is obtained from a disk by two banding operations and taking a boundary knot.
It is isotoped to $K$ as illustrated in Figure \ref{fig8}.
It is well known that $p(K) = 1$ as shown in Figure \ref{fig9}.
Figure \ref{fig10}(a) and Figure \ref{fig8}(b) are the same, and Figure \ref{fig10} shows that $nt(K) = 1$.

\begin{figure}[!hbt]
\includegraphics[width=13cm,clip]{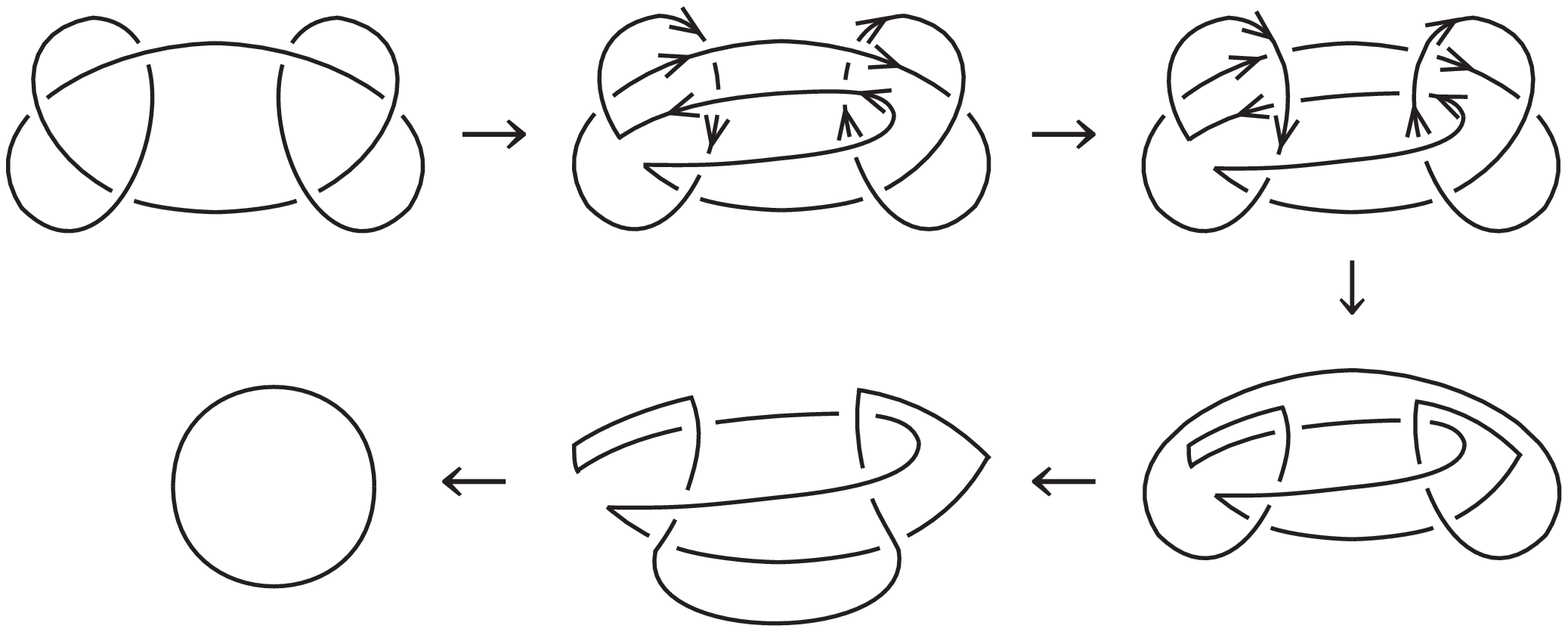}
\caption{$p(K) = 1$.}\label{fig9}
\end{figure}

\begin{figure}[!hbt]
\includegraphics[width=10.5cm,clip]{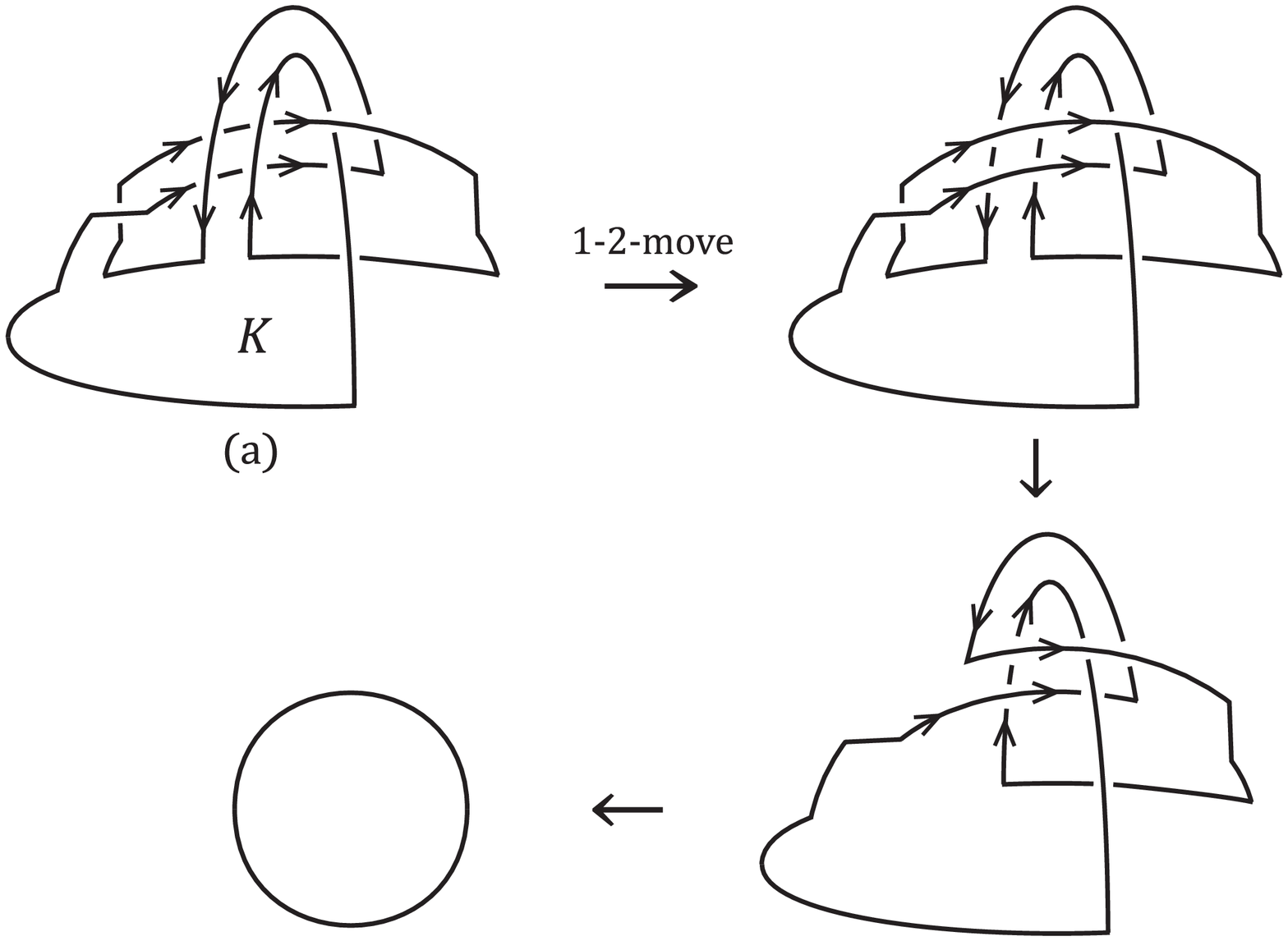}
\caption{$nt(K) = 1$.}\label{fig10}
\end{figure}
\end{proof}

\section{A link $L$ with $p(L) = 1$ and $nt(L) = 2$}\label{sec5}

Let $L$ be an untwisted $2$-cable of a Hopf link, where the two cables have opposite orientations for each pair.
See Figure \ref{fig11}.
It is obvious that $p(L)=1$.
We show that $nt(L)=2$.
Therefore the $1$-$2$-move number is not the same as the pass-move number.

We label the four components of $L$ and an unlink by $a, b, c, d$ respectively as in Figure \ref{fig11}.
Note that for $L$, the linking numbers satisfy
$\mathrm{lk}(a,c) = -1$, $\mathrm{lk}(a,d) = 1$, $\mathrm{lk}(b,c) = 1$, $\mathrm{lk}(b,d) = -1$,
$\mathrm{lk}(a,b) = \mathrm{lk}(c,d) = 0$,
whereas for the unlink,
$\mathrm{lk}(a,c) = \mathrm{lk}(a,d) = \mathrm{lk}(b,c) = \mathrm{lk}(b,d) = \mathrm{lk}(a,b) = \mathrm{lk}(c,d) = 0$.

\begin{figure}[!hbt]
\includegraphics[width=10cm,clip]{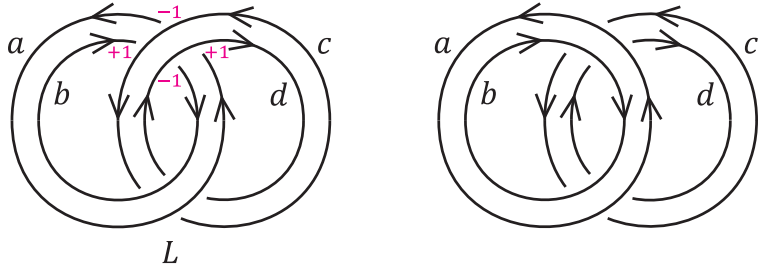}
\caption{The link $L$ and an unlink.}\label{fig11}
\end{figure}

\begin{theorem}\label{thm5.1}
$nt(L) = 2$
\end{theorem}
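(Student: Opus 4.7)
The plan is to combine an easy upper bound with a linking-number obstruction for the lower bound. The upper bound $nt(L) \le 2$ is immediate from Corollary \ref{cor2.2} applied to $p(L) = 1$, so the heart of the argument is the lower bound $nt(L) \ge 2$: a single $1$-$2$-move cannot convert $L$ into an unlink. I would establish this by tracking how such a move changes the matrix of pairwise linking numbers among the four components $a, b, c, d$, exploiting the fact that the move alters only its four local crossings while all other crossings in the diagram contribute unchanged amounts to the linking numbers.

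First I would treat the generic case, in which the four strands of a $1$-$2$-move belong to four distinct components $X, Y, U, V$, with $\{X, Y\}$ the parallel pair and $\{U, V\}$ the anti-parallel pair. Using the fact that the two crossings of a fixed anti-parallel strand with the two parallel strands share the same sign (since the parallel strands share direction), while the two crossings involving the other anti-parallel strand have the opposite sign, a direct crossing-sign computation shows that the move changes exactly the four linking numbers $\mathrm{lk}(X, U)$, $\mathrm{lk}(X, V)$, $\mathrm{lk}(Y, U)$, $\mathrm{lk}(Y, V)$, and that the resulting $2 \times 2$ change matrix has rank $1$, with both rows equal to $(-\epsilon, +\epsilon)$ for some $\epsilon \in \{\pm 1\}$. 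Because $\mathrm{lk}(X, Y)$ and $\mathrm{lk}(U, V)$ are left untouched by the move, the only partition of $\{a, b, c, d\}$ under which $L$ could possibly be sent to an unlink is $\{a, b\} \,|\, \{c, d\}$, this being the unique partition with both intra-part linking numbers equal to zero in $L$. But the change matrix required for this partition has rows $(+1, -1)$ and $(-1, +1)$ and is therefore of rank $2$, contradicting the rank-$1$ form produced by any $1$-$2$-move of this type.

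Finally I would rule out the degenerate cases, in which some of the four strands share a component. The three basic subcases are: the two parallel strands lie on the same component (each affected linking number receives two equal-sign contributions, so only even changes occur); the two anti-parallel strands lie on the same component (the two contributions to each affected linking number cancel, so no linking number changes); and a parallel strand shares a component with an anti-parallel strand (the common crossing becomes a self-crossing, and at most three linking numbers change, each by $\pm 1$). Combinations of these coincidences only shrink the list of changes further. Since in every degenerate subcase the number of linking numbers whose parity flips is at most three, whereas $L$ and the unlink differ in parity on exactly four linking numbers, none of these moves can reach an unlink. The main obstacle is the careful enumeration of the coincidence patterns and the corresponding bookkeeping of crossing signs; once this is in place, together with the rank argument in the generic case, the bound $nt(L) \ge 2$ follows, and combined with Corollary \ref{cor2.2} we obtain $nt(L) = 2$.
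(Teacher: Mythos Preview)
Your overall strategy---upper bound from Corollary~\ref{cor2.2}, lower bound by tracking linking numbers---is exactly the paper's, and your identification of the change matrix of a generic $1$-$2$-move as having two equal rows $(-\epsilon,+\epsilon)$ is correct. However, the key step of your generic case contains a genuine error: the required change matrix
\[
\begin{pmatrix} +1 & -1 \\ -1 & +1 \end{pmatrix}
\]
has determinant $0$ and hence rank~$1$, not rank~$2$. So the contradiction you claim (``rank~$1$ versus rank~$2$'') does not exist, and as written the argument does not close.

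The repair is immediate once you notice what actually distinguishes the two rank-$1$ matrices: a $1$-$2$-move with four distinct components always produces a change matrix whose two rows are \emph{equal}, whereas the change required to send $L$ to the unlink has rows that are \emph{negatives} of one another. No choice of $\epsilon$ makes $(-\epsilon,+\epsilon)$ simultaneously equal to $(+1,-1)$ and $(-1,+1)$. This is precisely the contradiction the paper extracts, though it phrases it as a short case analysis (fixing where $a$ sits and reading off one linking number that moves the wrong way) rather than in matrix language. Incidentally, the paper also bypasses your degenerate-case discussion entirely: since each of $a,b,c,d$ has a nonzero linking number that must change, and a $1$-$2$-move only affects linking numbers among components appearing in its four strands, all four components must appear, one per strand. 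Your parity count for the degenerate cases is correct but unnecessary.
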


\begin{proof}
Suppose that $nt(L) \ne 2$.
Since $nt(L) \le 2 p(L) = 2$ by Corollary \ref{cor2.2}, $nt(L) = 1$.
Consider a diagram $D$ of $L$ such that a single $1$-$2$-move on $D$ yields a diagram $D_0$ of an unlink.
Since by only a single $1$-$2$-move all linking numbers
$\mathrm{lk}(a,c)$, $\mathrm{lk}(a,d)$, $\mathrm{lk}(b,c)$, $\mathrm{lk}(b,d)$ change to $0$,
the four components $a, b, c, d$ should be involved in the four strands of the $1$-$2$-move on $D$.
There are two cases.

Case 1. Two parallel strands are over two anti-parallel strands. \\
We label the four strands of $D$ and $D_0$ by $1, 2, 3, 4$ respectively.
Without loss of generality, we assume that $D$ and $D_0$ are as in Figure \ref{fig12}.

\begin{figure}[!hbt]
\includegraphics[width=7cm,clip]{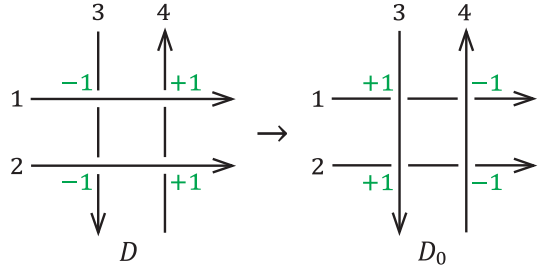}
\caption{Two parallel strands are over two anti-parallel strands..}\label{fig12}
\end{figure}

Case 1.1. The component $a$ belongs to the parallel strands. \\
Without loss of generality, we assume that $a = 1$. See Figure \ref{fig13}.
Then $b = 2$.
Since $\mathrm{lk}(a, c)$ increases by one from $-1$ to $0$, the component $c$ should be component $3$.
Then $\mathrm{lk}(b, c)$ also increases by one after the $1$-$2$-move and
it contradicts that $\mathrm{lk}(b, c) = 1$ in $L$ and $\mathrm{lk}(b, c) = 0$ in the unlink.

\begin{figure}[!hbt]
\includegraphics[width=7cm,clip]{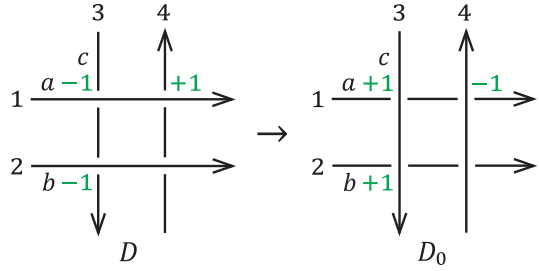}
\caption{The component $a$ belongs to the parallel strands.}\label{fig13}
\end{figure}

Case 1.2. The component $a$ belongs to the anti-parallel strands. \\
Without loss of generality, we assume that $a = 3$. See Figure \ref{fig14}.
Then $c = 1$ and $d = 2$, or $c = 2$ and $d = 1$.
In any case, both $\mathrm{lk}(a, c)$ and $\mathrm{lk}(a, d)$ increase by one after the $1$-$2$-move.
It contradicts that $\mathrm{lk}(a, c) = -1$ and $\mathrm{lk}(a, d) = 1$ in $L$ and
$\mathrm{lk}(a, c) = \mathrm{lk}(a, d) = 0$ in the unlink.

\begin{figure}[!hbt]
\includegraphics[width=7cm,clip]{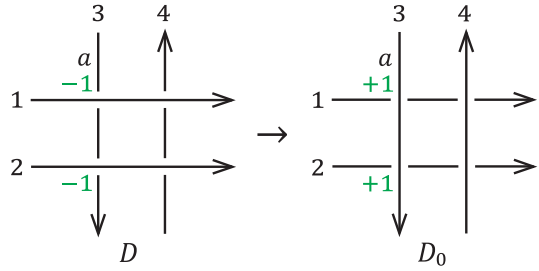}
\caption{The component $a$ belongs to the anti-parallel strands.}\label{fig14}
\end{figure}

Case 2. Two parallel strands are under two anti-parallel strands. \\
As above, we label the four strands of $D$ and $D_0$ by $1, 2, 3, 4$ respectively.
Without loss of generality, we assume that $D$ and $D_0$ are as in Figure \ref{fig15}.

\begin{figure}[!hbt]
\includegraphics[width=7cm,clip]{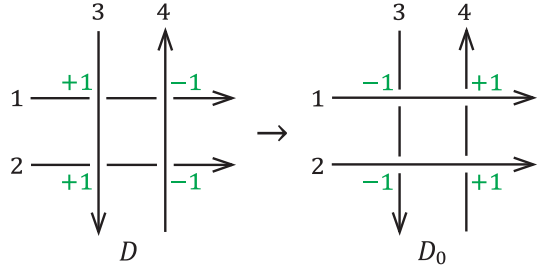}
\caption{Two parallel strands are under two anti-parallel strands.}\label{fig15}
\end{figure}

Case 2.1. The component $a$ belongs to the parallel strands. \\
Without loss of generality, we assume that $a = 1$. See Figure \ref{fig16}.
Then $b = 2$.
Since $\mathrm{lk}(a, c)$ increases by one from $-1$ to $0$, the component $c$ should be component $4$.
Then $\mathrm{lk}(b, c)$ also increases by one after the $1$-$2$-move and
it contradicts that $\mathrm{lk}(b, c) = 1$ in $L$ and $\mathrm{lk}(b, c) = 0$ in the unlink.

\begin{figure}[!hbt]
\includegraphics[width=7cm,clip]{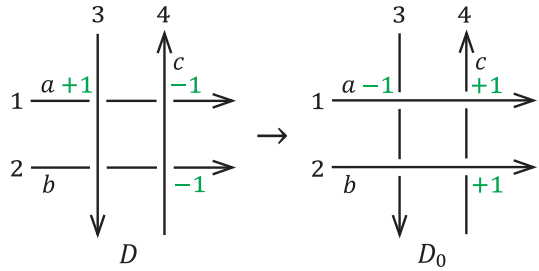}
\caption{The component $a$ belongs to the parallel strands.}\label{fig16}
\end{figure}

Case 2.2. The component $a$ belongs to the anti-parallel strands. \\
Without loss of generality, we assume that $a = 3$. See Figure \ref{fig17}.
Then $c = 1$ and $d = 2$, or $c = 2$ and $d = 1$.
In any case, both $\mathrm{lk}(a, c)$ and $\mathrm{lk}(a, d)$ decrease by one after the $1$-$2$-move.
It contradicts that $\mathrm{lk}(a, c) = -1$ and $\mathrm{lk}(a, d) = 1$ in $L$ and
$\mathrm{lk}(a, c) = \mathrm{lk}(a, d) = 0$ in the unlink.

\begin{figure}[!hbt]
\includegraphics[width=7cm,clip]{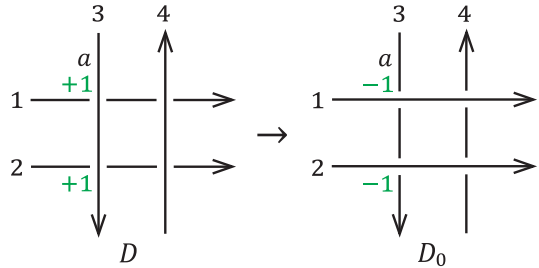}
\caption{The component $a$ belongs to the anti-parallel strands.}\label{fig17}
\end{figure}

\end{proof}

%{\noindent \bf Acknowledgments.}

%The author was supported by the Basic Science Research Program through the National Research Foundation of Korea (NRF)
%funded by the Ministry of Education (2018R1D1A1A09081849).

\end{document}